\documentclass[reqno]{amsart}


\usepackage{amsmath}
\usepackage{amsfonts}
\usepackage{amssymb,enumerate}
\usepackage{amsthm}
\usepackage[all]{xy}
\usepackage{rotating}
\usepackage{hyperref}
\usepackage{color}


\theoremstyle{plain}
\newtheorem{lem}{Lemma}[section]
\newtheorem{cor}[lem]{Corollary}
\newtheorem{prop}[lem]{Proposition}
\newtheorem{thm}[lem]{Theorem}

\newtheorem*{mthm*}{Main Theorem}

\theoremstyle{definition}
\newtheorem{defn}[lem]{Definition}
\newtheorem{ex}[lem]{Example}

\newtheorem{disc}[lem]{Remark}

\newtheorem{para}[lem]{}

\newtheorem*{convention*}{Convention}





\newcommand{\id}{\operatorname{id}}



\newcommand{\HH}{\operatorname{H}}

\newcommand{\im}{\operatorname{Im}}
\newcommand{\shift}{\mathsf{\Sigma}}

\newcommand{\Ker}{\operatorname{Ker}}







\newcommand{\xra}{\xrightarrow}





\renewcommand{\geq}{\geqslant}
\renewcommand{\leq}{\leqslant}
\renewcommand{\ker}{\Ker}


\newcommand{\Hom}{\operatorname{Hom}}

\def\K{\mathcal{K}}
\def\Der{\mathrm{Der}}

\newcommand{\dd}{\mathbf{d}}
\newcommand{\ddd}{\overline{\mathbf{d}}}
\newcommand{\dddd}{\mathfrak{D}}
\newcommand{\hh}{\mathbf{h}}

\numberwithin{equation}{lem}

\begin{document}

\bibliographystyle{amsplain}

\title[]{On the semifree resolutions of DG algebras\\ over the enveloping DG algebras}

\author{Saeed Nasseh}
\address{Department of Mathematical Sciences\\
Georgia Southern University\\
Statesboro, GA 30460, U.S.A.}
\email{snasseh@georgiasouthern.edu}

\author{Maiko Ono}
\address{Institute for the Advancement of Higher Education, Okayama University of Science, Ridaicho, Kitaku, Okayama 700-0005, Japan}
\email{ono@ous.ac.jp}

\author{Yuji Yoshino}
\address{Graduate School of Natural Science and Technology, Okayama University, Okayama 700-8530, Japan}
\email{yoshino@math.okayama-u.ac.jp}

\thanks{Y. Yoshino was supported by JSPS Kakenhi Grant 19K03448.}


\keywords{bar resolution, DG algebra, DG module, diagonal ideal, enveloping DG algebra, lifting, reduced bar resolution, semifree resolution, tensor algebra, universal derivation.}
\subjclass[2010]{16E05, 16E45, 57T30.}

\begin{abstract}
The goal of this paper is to construct a semifree resolution for a non-negatively graded strongly commutative DG algebra $B$ over the enveloping DG algebra $B\otimes_AB$, where $A\subseteq B$ is a DG subalgebra and $B$ is semifree over $A$. Our construction of such a semifree resolution uses the notions of reduced bar resolution and tensor algebra of the shift of the diagonal ideal.
\end{abstract}

\maketitle


\section{Introduction}\label{sec20200314a}

This paper was initially motivated by our in-progress work in~\cite{NOY2} on the lifting theory of differential graded (DG) modules which has been significantly developed in the recent years by the authors for the purpose of studying the vanishing of Ext over commutative noetherian local rings. The goal of this paper is to construct a specific semifree resolution, which we denote by $(\mathbb{B},\mathbb{D})$, of a non-negatively graded strongly commutative DG algebra $B$ over the enveloping DG algebra $B\otimes_AB$, where $A\subseteq B$ is a DG subalgebra and $B$ is semifree over $A$. Our methods to construct such a semifree resolution $(\mathbb{B},\mathbb{D})$, which may be of independent interest for other possible applications, require the use of the notions of reduced bar resolution and tensor algebra of the shift of the diagonal ideal (which we refer to as the ``diagonal tensor algebra'' in~\cite{NOY2}).

After establishing the terminology and conventions in Section~\ref{sec20200314b'}, in Section~\ref{main section} we introduce the construction of $(\mathbb{B},\mathbb{D})$ and prove Theorem~\ref{thm20221205a}, which is our main result in this paper.
We also discuss how the lifting theory of DG modules is related to the notion of classical bar resolution in the appendix; see Theorem~\ref{thm20221109a}.

\section{Terminology and conventions}\label{sec20200314b'}

We assume that the reader is familiar with the basic concepts of DG homological algebra. The purpose of this section is to only specify some of the terminology and establish the conventions that will be used in the subsequent sections. General references on DG algebras, DG modules, and their properties include~\cite{avramov:ifr,avramov:dgha, felix:rht, GL}. For the unexplained terminology and facts, the reader may also consult~\cite{NOY1}.

\begin{para}\label{para20200329a}
Throughout the paper, $R$ is a commutative ring. By a differential graded (DG, for short) $R$-algebra we mean a non-negatively graded strongly commutative DG $R$-algebra. In this paper, $(A,d^A)$, or simply $A$, is a DG $R$-algebra with the underlying graded structure $A  = \bigoplus  _{n \geq 0} A _n$ and the differential $d^A$. For a homogeneous element $a\in A$, we denote by $|a|$ the degree of $a$.

The DG modules considered in this paper are right DG modules, unless otherwise stated.
For a DG $A$-module $(M, \partial^M)$, or simply $M$, we often refer to the graded $A$-module $M=\bigoplus_{i\in \mathbb{Z}}M_i$ as the underlying graded structure of $M$.
For an integer $i$, the $i$-th shift of a DG $A$-module $M$ is denoted by $\shift^i M$.
When $i=1$, we simply write $\shift M$ instead of $\shift^1 M$.
\end{para}

\begin{defn}
A DG $A$-module $P$ is \emph{semifree} if there is a well-ordered subset $F\subseteq P$ which is a basis for the underlying graded $A$-module $P$ such that for every element $f\in F$ we have $\partial^P(f)\in \sum_{e<f}eA$; for more details see~\cite{AH},~\cite[A.2]{AINSW}, or~\cite{felix:rht}. The set $F$ is called a \emph{semifree basis of $P$.}
A \emph{semifree resolution} of a DG $A$-module $M$ is a quasiisomorphism $P\xra{\simeq}M$, where $P$ is a semifree DG $A$-module.


\end{defn}

\begin{para}\label{assumption}
Throughout the paper, $(B,d^B)$, or simply $B$, is another DG $R$-algebra and $\varphi\colon A\to B$ is a homomorphism of DG $R$-algebras.
We further assume that $B$ is semifree as a DG $A$-module.
In this case, $1\in A$ gives a basis element of $B$ as an underlying graded free $A$-module and thus, $A$ is a DG $R$-subalgebra of $B$.
Let $\overline{B} := B/A$ and note that $\overline{B}$ is a semifree DG $A$-module as well.
\end{para}


\begin{ex}
For $n\leq \infty$, let
\begin{enumerate}[\rm(i)]
\item
$B=A\langle X_i\mid 1\leq i\leq n\rangle$ be a free extension of $A$, where $A$ is a divided power DG $R$-algebra; or
\item
$B=A[X_i\mid 1\leq i\leq n]$ be a polynomial extension of the DG $R$-algebra $A$.
\end{enumerate}
Then $A$ and $B$ satisfy the assumptions of~\ref{assumption}. In Case (i),  $\overline{B}$  is isomorphic to the ideal of  $B$ generated by all monomials except $1$ that contain the divided powers, and in Case (ii), $\overline{B}$  is isomorphic to the ideal of  $B$ generated by all monomials except $1$.
\end{ex}

\begin{ex} Assume that $A=R$ is a regular local ring and $I$ is an ideal of $R$. By taking a suitable quotient of the Tate resolution of $R/I$ over $R$, we see that there is a DG $R$-algebra $B$ which is finitely generated and semifree over $A$; see, for instance,~\cite[Proposition 2.2.8]{avramov:ifr} or~\cite{Tate}.
Again, $A$ and $B$ satisfy the assumptions of~\ref{assumption} and here, $\overline{B}$ has a finite semifree basis over $A$.
\end{ex}

\begin{para}\label{para20201114e}
Let $B^e:=B \otimes_A B$ denote the enveloping DG $R$-algebra of $B$ over $A$. Note that $B$ is a DG $R$-subalgebra of $B^e$ via the natural injection $B \to B^e$ defined by $b\mapsto b \otimes_A 1$ and DG $B^e$-modules are precisely the DG $(B, B)$-bimodules.

Following~\cite{NOY1}, the kernel of the DG $R$-algebra homomorphism $\pi_B\colon B^e\to B$ defined by $\pi_B(b\otimes_A b')=bb'$ is called the diagonal ideal and is denoted by $J$.
In this case, there is a short exact sequence
\begin{equation}\label{basic sequence}
0 \to J \xra{\iota} B^e \xra{\pi_B}  B\to 0
\end{equation}
of DG $B^e$-modules, where $\iota$ is the natural injection.
Note that $J$ is a semifree left and right DG $B$-module, but it is not semifree as a DG $B^e$-module.
\end{para}

\begin{para}
For each integer $n \geq 0$, let $J^{\otimes _B n}:=J \otimes _ B \cdots \otimes _ B J$ be the $n$-fold tensor product of $J$ over $B$ with the convention that $J ^{\otimes_B 0}=B$. Since  $J$  is free as an underlying graded left $B$-module, applying the functor $- \otimes _ B J^{\otimes _B n}$ to~\eqref{basic sequence}, we obtain the following short exact sequence of DG $B^e$-modules:
$$
0 \to J ^{\otimes _B (n+1)}\xra{\iota_n}  B \otimes _A J^{\otimes _B n} \xra{\pi_n} J^{\otimes _B n} \to 0.
$$
\end{para}

\begin{defn}
The well-defined $A$-linear map $\delta\colon B\to J$ that is given by the formula $\delta(b)=1\otimes_A b-b\otimes_A 1$ for all $b\in B$ is called the \emph{universal derivation}.
\end{defn}

In the following subsection, we discuss the notion of the reduced bar resolution of the DG $B^e$-module $B$ via the diagonal ideal $J$.

\subsection*{Reduced bar resolution}
For each integer $n\geq 0$, let $\overline{B} ^{\otimes_A n} := \overline{B} \otimes_A  \cdots \otimes_A  \overline{B}$ be the $n$-fold tensor product of $\overline{B}$ over $A$ with the convention that $\overline{B} ^{\otimes_A 0}=A$. Note that the DG $B^e$-module $B \otimes_A \overline{B}^{\otimes_A n} \otimes_A B$ is semifree.
In fact,  if  $\{ x _i \}_{i\in I}$  is a semifree basis of the DG $A$-module $\overline{B}$, then
$\{ 1 \otimes_A x_{i_1} \otimes_A \cdots \otimes_A x_{i_n} \otimes_A 1 \}_{\{i_j\in I\mid 1\leq j\leq n\}}$ (with some suitable ordering)  is a semifree basis of the DG $B^e$-module $B \otimes_A \overline{B}^{\otimes_A n} \otimes_A B$.

Now, let $\ddd_0:=\pi_B$ and for each integer $n\geq 1$, let $\ddd _n$
be the composition
$$
B \otimes _A  J^{\otimes _B n} \xra{\pi _n} J^{\otimes _B n} \xra{\iota _{n-1}} B \otimes _A  J^{\otimes _B (n-1)}.
$$
Each $B \otimes _A  J^{\otimes _B n}$  is a semifree DG $B^e$-module and $\ddd _n$  is a DG $B^e$-module homomorphism.
By definition, there is a long exact sequence
\begin{equation}\label{reduced bar}
\cdots \to B \otimes _A  J^{\otimes _B n} \xra{\ddd_n} B \otimes _A  J^{\otimes _B (n-1)} \to \cdots \to B \otimes _A  J \xra{\ddd_1}
B^e \xra{\ddd_0} B \to 0
\end{equation}
of DG $B^e$-modules which is a graded free resolution of $B$ as an underlying graded $B^e$-module.
Note that~\eqref{reduced bar} is a split exact sequence as one-sided (right or left) DG $B$-modules.

\begin{prop}\label{remark1}
For all $n\geq 0$, there exists an isomorphism $J^{\otimes _B n} \cong \overline{B} ^{\otimes_A n}  \otimes _A B$ of underlying right $B$-modules. Hence, \eqref{reduced bar} is a graded free resolution of $B$ consisting of the underlying graded free $B^e$-modules
$B \otimes_A  \overline{B} ^{\otimes_A n}  \otimes _A B$ with $n\geq 0$.
\end{prop}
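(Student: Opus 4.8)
The plan is to prove the isomorphism $J^{\otimes_B n}\cong\overline{B}^{\otimes_A n}\otimes_A B$ by induction on $n$, where the crucial work is the base case $n=1$; the case $n=0$ is the trivial identity $J^{\otimes_B 0}=B=A\otimes_A B$, and the final ``Hence'' clause will then be immediate.

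For the base case I would exhibit mutually inverse maps between $J$ and $\overline{B}\otimes_A B$. Using the universal derivation, I define $\Phi\colon\overline{B}\otimes_A B\to J$ by $\Phi(\overline{b}\otimes_A b')=\delta(b)b'=(1\otimes_A b-b\otimes_A 1)b'$, where $b$ lifts $\overline{b}$; this is well defined because $\delta$ vanishes on $A$, and it is right $B$-linear by construction. In the other direction, letting $p\colon B\to\overline{B}$ be the canonical projection, I set $\rho:=p\otimes_A\mathrm{id}_B\colon B^e\to\overline{B}\otimes_A B$, a homomorphism of $(A,B)$-bimodules, and restrict it to $J$. Writing a general element of $J$ as $\sum_\alpha x_\alpha\otimes_A b_\alpha$ (with respect to a semifree basis $\{1\}\cup\{x_i\}$ of $B$ over $A$) subject to $\sum_\alpha x_\alpha b_\alpha=0$, a short computation gives $\rho|_J\circ\Phi=-\mathrm{id}$ and $\Phi\circ\rho|_J=-\mathrm{id}$, so that $\rho|_J\colon J\xrightarrow{\cong}\overline{B}\otimes_A B$ is an isomorphism of $(A,B)$-bimodules. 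Equivalently, $\pi_B$ splits on the right via $b\mapsto 1\otimes_A b$, whence $B^e=J\oplus(1\otimes_A B)$, and $1\otimes_A B=\ker\rho$ precisely because $B$ is free, hence flat, over $A$; thus $\rho$ restricts to an isomorphism on the complement $J$.

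For the inductive step I would write $J^{\otimes_B n}=J^{\otimes_B(n-1)}\otimes_B J$ and feed in the hypothesis $J^{\otimes_B(n-1)}\cong\overline{B}^{\otimes_A(n-1)}\otimes_A B$. Since $-\otimes_B J$ only sees the right $B$-structure of the left factor, associativity of tensor gives
\[
J^{\otimes_B n}\cong(\overline{B}^{\otimes_A(n-1)}\otimes_A B)\otimes_B J\cong\overline{B}^{\otimes_A(n-1)}\otimes_A(B\otimes_B J)\cong\overline{B}^{\otimes_A(n-1)}\otimes_A J,
\]
using the bimodule identification $B\otimes_B J\cong J$. Applying the base-case isomorphism $J\cong\overline{B}\otimes_A B$ of $(A,B)$-bimodules in the last slot (legitimate because $\overline{B}^{\otimes_A(n-1)}\otimes_A-$ uses exactly the left $A$-action, which $\rho|_J$ respects) yields $\overline{B}^{\otimes_A(n-1)}\otimes_A\overline{B}\otimes_A B=\overline{B}^{\otimes_A n}\otimes_A B$. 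Every isomorphism in the chain respects the left $A$-action and the right $B$-action, so the composite is an isomorphism of $(A,B)$-bimodules, completing the induction.

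For the concluding assertion I would tensor the resulting bimodule isomorphism on the left with $B$ over $A$, identifying the $n$-th term $B\otimes_A J^{\otimes_B n}$ of \eqref{reduced bar} with $B\otimes_A\overline{B}^{\otimes_A n}\otimes_A B$ as graded $B^e$-modules (left $B$ on the outer factor, right $B$ on the last factor). Since \eqref{reduced bar} was already observed to be a graded free resolution of $B$ over $B^e$, and $B\otimes_A\overline{B}^{\otimes_A n}\otimes_A B$ was already recorded as a free $B^e$-module with an explicit semifree basis just before the statement, the conclusion follows at once. I expect the main obstacle to be the bookkeeping of one-sided module structures: each factor $\otimes_B$ must be fed the correct (right) $B$-action, while the base isomorphism $J\cong\overline{B}\otimes_A B$ is only left-$A$/right-$B$ linear and \emph{not} left-$B$ linear, so one must check at every step that the transported isomorphism matches the slot it is plugged into, and that flatness of $B$ over $A$ is precisely what secures $\ker\rho=1\otimes_A B$.
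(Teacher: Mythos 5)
Your proof is correct and follows essentially the same route as the paper: the paper also obtains the case $n=1$ from the right-$B$-module splitting $B^e=J\oplus(A\otimes_A B)$ (phrased via the snake lemma rather than your explicit mutually inverse maps $\Phi$ and $\rho|_J$), and then deduces the case $n\geq 2$ by exactly the inductive tensor-juggling you spell out. Your extra care about which one-sided structures each isomorphism respects is precisely the content the paper compresses into ``follows readily from this.''
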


\begin{proof}
The statement is clear for $n=0$. For $n=1$, note that the exact sequence~\eqref{basic sequence} of DG $B^e$-modules is split as a sequence of underlying right $B$-modules.
Such splitting comes from the identification of  $B$  as a right  $B$-submodule $A \otimes _A B$  of  $B^e$. Hence, using the snake lemma in the diagram
$$
\xymatrix{
0 \ar[r] & J  \ar[r] & B^e \ar[r] & B \ar[r]  & 0 \\
 && A \otimes_A B \ar[u] \ar[r]^<<<<<{\cong} & B \ar@{=}[u]& }
$$
we have an isomorphism  $J \cong \overline{B} \otimes _A B$ of underlying right $B$-modules. The isomorphism $J^{\otimes _B n} \cong \overline{B} ^{\otimes_A n}  \otimes _A B$ for $n\geq 2$ follows readily from this.
\end{proof}


\begin{defn}\label{defn20221118a}
The exact sequence~\eqref{reduced bar} of DG $B^e$-modules is called the \emph{reduced bar resolution} of $B$ and we denote it by $(\overline{\mathbf{B}},\overline{\dd})$.
\end{defn}

\section{Constructing a semifree resolution of the DG $B^e$-module $B$}\label{main section}

By Propositions~\ref{prop20221206a} (from the appendix) and~\ref{remark1}, the classical and reduced bar resolutions are semifree resolutions of $B$ over $B^e$. The purpose of this section is to construct a new semifree resolution $(\mathbb{B}, \mathbb{D})$ of the DG $B^e$-module $B$ using the reduced bar resolution and the tensor algebra of $\shift J$.

\subsection*{The underlying $B^e$-module structure $\mathbb{B}$}
Let $T$ denote the \emph{tensor algebra of $\shift J$ over $B$}, that is,
$$T:=\bigoplus _{n \geq 0} (\shift J) ^{\otimes_B n}=B \oplus \shift J \oplus \left(\shift J \otimes _B \shift J\right)  \oplus \left(\shift J  \otimes _B \shift J \otimes _B \shift J \right) \oplus \cdots.$$
Note that $T$ has an algebra structure by taking the tensor product as multiplication. At the same time, $T$ is a DG $B^e$-module with the differential $\partial^T$ defined by the Leibniz rule on $(\shift J) ^{\otimes_B n}$ by the formula
$$
\partial ^T (c_1 \otimes_B \cdots \otimes_B c_n ) = \sum _{i=1}^n (-1) ^{|c_1| + \cdots + |c_{i-1}|} c_1 \otimes_B \cdots \otimes_B \partial ^{\shift J} (c_i) \otimes_B \cdots \otimes_B c_n
$$
where  $c_i \in \shift J$ for all $1 \leq i \leq n$.
Now, let $$\mathbb{B}:= B \otimes _A T$$ and note that
$\mathbb{B}=\bigoplus _{n\geq 0} \left(B \otimes _A (\shift J) ^{\otimes_B n}\right)$ with the convention that $(\shift J) ^{\otimes_B 0}=B$.
Note also that for all integers $n\geq 0$ there is an isomorphism $B \otimes _A (\shift J) ^{\otimes_B n} \cong B \otimes _A \overline{B} ^{\otimes_A n}  \otimes _A B$ of graded $B^e$-modules by Proposition~\ref{remark1}. Hence, $\mathbb{B}$ is free as an underlying graded $B^e$-module.

\subsection*{The differential structure $\mathbb{D}$}
We can initially equip $\mathbb{B}$ with two differentials: one differential is just
$\partial ^{\mathbb{B}} := \partial ^{B \otimes _A T}$. More precisely, for each integer $n\geq 0$ and $b \otimes_A  \tau \in  B \otimes _A (\shift J)^{\otimes_B n}$ we have the equality
$$
\partial ^{\mathbb{B}}( b \otimes_A \tau ) =
d^B(b) \otimes_A \tau + (-1)^{|b|} b \otimes_A \partial ^T ( \tau ).
$$
To define the other differential, note that for each integer $n\geq 0$ we have an isomorphism $\shift^n(B\otimes_A J^{\otimes_Bn})\xra{\psi_n} B\otimes_A(\shift J)^{\otimes_Bn}$ defined by the formula
$$
\psi_n\left(b\otimes_A\tau_1\otimes_B\cdots\otimes_B\tau_n\right)= (-1)^{n|b|+\sum_{i=1}^{n-1}(n-i)|\tau_i|}b\otimes_A\tau_1\otimes_B\cdots\otimes_B\tau_n
$$
where $|\tau_i|$ is the degree of $\tau_i$ in $J$ and $\psi_0=\id_{B^e}$.
Now, the other differential on $\mathbb{B}$, which we denote by $\dddd=\{\dddd_n\}_{n\geq 1}\colon \mathbb{B} \to \shift\mathbb{B}$, is induced by the maps $\shift^n \ddd _n$ via the isomorphisms $\psi_n$, that is, for all $n\geq 1$ we have the following commutative diagram:
$$
\xymatrix{
\shift^n(B\otimes_A J^{\otimes_Bn})\ar[rr]^{\psi_n}\ar[d]_{\shift^n \ddd _n}&& B\otimes_A(\shift J)^{\otimes_Bn}\ar[d]^{\dddd_n}\\
\shift^n(B\otimes_A J^{\otimes_B{(n-1)}})\ar[rr]^-{\shift \psi_{n-1}}&&\shift(B\otimes_A(\shift J)^{\otimes_B{(n-1)}}).
}
$$

\begin{disc}\label{disc20230211a}
It is straightforward to check that $(\mathbb{B}, \partial ^{\mathbb{B}})$  is a semifree DG $B^e$-module and that $\dddd\colon (\mathbb{B}, \partial ^{\mathbb{B}}) \to (\shift \mathbb{B}, \partial ^{\shift \mathbb{B}})$ is a DG $B^e$-module homomorphism. In particular, we have
\begin{equation}\label{eq20221202a}
\dddd\partial ^{\mathbb{B}}=-\partial ^{\mathbb{B}}\dddd.
\end{equation}
Note that $\cdots\xra{\dddd_3}\mathbb{B}_2\xra{\dddd_2}\mathbb{B}_1\xra{\dddd_1}B^e\xra{\pi_B}B\to 0$
is in fact the reduced bar resolution of the DG $B^e$-module $B$.
\end{disc}

Now, we define a new differential $\mathbb{D}\colon  \mathbb{B} \to \shift \mathbb{B}$ on $\mathbb{B}$ by the formula
$$\mathbb{D} := \partial^{\mathbb{B}} + \dddd.$$ More precisely, $\mathbb{D}_0 = \partial^{\mathbb{B}}$ and for each integer $n\geq 1$ and an element $b \otimes_A  \tau \in  B \otimes _A (\shift J)^{\otimes_B n}$ we have the equality
$$
\mathbb{D}_n ( b \otimes_A \tau ) =
d^B(b) \otimes_A \tau + (-1)^{|b|} b \otimes_A \partial ^T ( \tau ) + \dddd _n ( b \otimes_A \tau).
$$

\begin{disc}\label{para20221201a}
For an integer $n\geq 0$, let $B ^{\otimes_A n} := B \otimes_A  B \otimes_A  \cdots \otimes_A  B$
be the $n$-fold tensor product of $B$ over $A$ with the convention that $B^{\otimes_A 0}=A$. Note that each $B^{\otimes_A n}$ naturally has a DG $R$-algebra structure with the naturally defined differential  $\partial ^{B ^{\otimes_A n} }$ (and hence, $B ^{\otimes_A 2} = B^e$). Recall that  $J$ is a DG ideal of $B ^{e}$ that is free as a one-sided underlying graded $B$-module.
Thus, $J \otimes _B J \subseteq  B^{\otimes_A 2} \otimes _B B^{\otimes_A 2} \cong B ^{\otimes_A 3}$.
By induction, we see that $J ^{\otimes _B n}  \subseteq  B ^{\otimes_A (n+1)}$ for all integers $n \geq1$.
Also, by Proposition~\ref{remark1}, there is an isomorphism  $\kappa\colon B \otimes _A \overline{B} \to J$ of underlying left $B$-modules such that $\kappa(b_1 \otimes_A \overline{b_2})=b_1\delta(b_2)$ for all $b_1,b_2\in B$.
Similarly, by induction, for each integer $n\geq 0$, one can introduce an isomorphism
$\kappa_n\colon B \otimes _A \overline{B}^{\otimes_A n}  \to J^{\otimes _B n}$ of underlying left $B$-modules such that $\kappa_1=\kappa$.

Let  $\{ b_{\lambda} \}_{\lambda\in \Lambda}\cup \{ 1\}$  be a semifree basis of the DG $A$-module $B$. Then, the set  $\{\overline{b_{\lambda}}\}_{\lambda\in \Lambda}$ is a basis of $\overline{B}$ as an underlying free $A$-module.
The isomorphism $\kappa$ shows that  $J$ is free as an underlying left $B$-module with the basis $\{ \delta (b_{\lambda}) \}_{\lambda\in \Lambda}$. Hence, $J^{\otimes _B n}$  is also free as an underlying left (resp. right) $B$-module with the basis
$$
\{ \delta (b_{\lambda _1}) \otimes_B \delta (b_{\lambda_2 }) \otimes _B \cdots \otimes_B  \delta (b_{\lambda _n}) \}_{\{\lambda_i\mid 1\leq i\leq n\}\subseteq \Lambda}.
$$
For each integer $n\geq 0$, note that the left $B$-module isomorphism $\kappa_n^{-1}$ is realized by mapping a basis element $\delta (b_{\lambda _1}) \otimes_B \delta (b_{\lambda_2 }) \otimes _B \cdots \otimes_B  \delta (b_{\lambda _n})$ to $1 \otimes_A \overline{b_{\lambda_1}} \otimes_A \cdots \otimes_A \overline{b_{\lambda_n}}$.
\end{disc}

\begin{prop}\label{lem20221202a}
$(\mathbb{B}, \mathbb{D})$ is a semifree DG $B^e$-module.
\end{prop}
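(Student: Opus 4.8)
The plan is to treat $\mathbb{D} = \partial^{\mathbb{B}} + \dddd$ as a perturbation of the semifree differential $\partial^{\mathbb{B}}$ by the reduced-bar map $\dddd$, and to verify separately that (i) $(\mathbb{B},\mathbb{D})$ is a DG $B^e$-module and (ii) its underlying graded free module admits a semifree basis for $\mathbb{D}$. By Remark~\ref{disc20230211a} we already know that $(\mathbb{B},\partial^{\mathbb{B}})$ is a semifree DG $B^e$-module and that $\dddd$ is a DG $B^e$-module homomorphism satisfying \eqref{eq20221202a}; these, together with Proposition~\ref{remark1}, are the only inputs needed.

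For (i), the Leibniz rule for $\mathbb{D}$ is immediate: $\partial^{\mathbb{B}}$ satisfies it because it is the differential of a DG module, while $\dddd$ is a (right) $B^e$-linear map, so their sum again satisfies the Leibniz rule and has degree $-1$. It remains to check $\mathbb{D}^2 = 0$. Expanding,
$$\mathbb{D}^2 = (\partial^{\mathbb{B}})^2 + \bigl(\partial^{\mathbb{B}}\dddd + \dddd\partial^{\mathbb{B}}\bigr) + \dddd^2,$$
the first term vanishes since $\partial^{\mathbb{B}}$ is a differential, the middle term vanishes by \eqref{eq20221202a}, and the last term vanishes because $\{\dddd_n\}$ together with $\pi_B$ is exactly the reduced bar resolution (Remark~\ref{disc20230211a}), hence a complex: $\dddd_{n-1}\dddd_n = 0$ for all $n$. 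I would spell out that this last identity is transported from the relation $\ddd_{n-1}\ddd_n = 0$ in \eqref{reduced bar} through the shift isomorphisms $\psi_n$, the signs in $\psi_n$ being chosen precisely so that $\dddd$ becomes a genuine square-zero degree $-1$ operator.

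For (ii), I would exploit the tensor-degree grading $\mathbb{B} = \bigoplus_{n\geq 0}\mathbb{B}_n$ with $\mathbb{B}_n = B\otimes_A(\shift J)^{\otimes_B n}$. The operator $\partial^{\mathbb{B}}$ preserves each $\mathbb{B}_n$, whereas $\dddd_n$ maps $\mathbb{B}_n$ into $\mathbb{B}_{n-1}$; hence the subspaces $F_p := \bigoplus_{n\leq p}\mathbb{B}_n$ form an exhaustive increasing filtration of $\mathbb{B}$ by $\mathbb{D}$-stable $B^e$-submodules, with $F_p/F_{p-1}\cong(\mathbb{B}_p,\partial^{\mathbb{B}})$. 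Each $(\mathbb{B}_p,\partial^{\mathbb{B}})$ is semifree, its explicit semifree basis over $B^e$ being the one exhibited for $B\otimes_A\overline{B}^{\otimes_A p}\otimes_A B$ in the construction of the reduced bar resolution, transported along the isomorphism of Proposition~\ref{remark1}. Concretely, I would take the union over $p$ of these level-$p$ bases, well-ordered so that all level-$p$ elements precede all level-$(p+1)$ elements and the order within level $p$ is the semifree order for $\partial^{\mathbb{B}}$. For a basis element $f$ of level $p$ one then has $\mathbb{D}(f) = \partial^{\mathbb{B}}(f) + \dddd_p(f)$, where $\partial^{\mathbb{B}}(f)$ lies in the $B^e$-span of smaller level-$p$ basis elements and $\dddd_p(f)\in\mathbb{B}_{p-1}$ lies in the span of strictly lower-level (hence smaller) basis elements; thus $\mathbb{D}(f)\in\sum_{e<f}eB^e$, which is exactly the semifree condition.

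I expect the only genuinely delicate point to be the sign bookkeeping behind $\dddd^2 = 0$ (equivalently, that the isomorphisms $\psi_n$ turn the reduced bar complex into a square-zero operator anticommuting with $\partial^{\mathbb{B}}$); once \eqref{eq20221202a} and the complex identity for $\{\dddd_n\}$ are in hand, both the DG-module axioms and the semifreeness follow formally from the filtration argument.
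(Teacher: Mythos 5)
Your proposal is correct and follows essentially the same route as the paper: Leibniz for $\mathbb{D}$ from the Leibniz rule for $\partial^{\mathbb{B}}$ plus $B^e$-linearity of $\dddd$, the identity $\mathbb{D}^2=0$ from \eqref{eq20221202a} together with $\dddd^2=0$ (coming from the reduced bar complex), and semifreeness via the explicit free basis of $\mathbb{B}$ over $B^e$. The only difference is that you spell out the ``suitable ordering'' the paper leaves implicit, by filtering by tensor degree and ordering level $p$ before level $p+1$; this is a welcome elaboration rather than a different argument.
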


\begin{proof}
Since  $\partial^{\mathbb{B}}$ satisfies the Leibniz rule and $\dddd$  is $B^e$-linear,
$\mathbb{D}$ satisfies the Leibniz rule as well.
The equality~\eqref{eq20221202a}  implies that $\mathbb{D}^2=0$.
Therefore, $(\mathbb{B}, \mathbb{D})$ is a DG $B^e$-module.
Using the notation from Remark~\ref{para20221201a}, we know that $J$ is free as an underlying right $B$-module with the basis $\{ \delta (b_{\lambda}) \}_{\lambda\in \Lambda}$. Hence, for each integer $n\geq 0$, the underlying $B^e$-module $B \otimes _A J^{\otimes _B n}$  is free with the basis
$$\{ 1 \otimes _A \delta (b_{\lambda _1}) \otimes_B \delta (b_{\lambda_2 }) \otimes _B \cdots \otimes_B  \delta (b_{\lambda _n}) \}_{\{\lambda_i\mid 1\leq i\leq n\}\subseteq \Lambda}.$$
Thus, we obtain a free basis of the underlying $B^e$-module  $\mathbb{B}$ that is a semifree basis with a suitable ordering on the index set.
\end{proof}

\subsection*{The semifree resolution $\alpha$}
Consider the map
$\alpha\colon (\mathbb{B}, \mathbb{D}) \to (B, d^B)$  which is defined as follows:
$$
\alpha:=
\begin{cases}
\ddd_0& \text{on}\ B^e\\
0& \text{on}\ B \otimes _A (\shift J) ^{\otimes _B n}\ \text{for all}\ n\geq 1.
\end{cases}
$$

\begin{prop}\label{lem20221205a}
$\alpha$ is a DG $B^e$-module homomorphism.
\end{prop}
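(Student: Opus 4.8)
The plan is to verify the two defining properties of a morphism of DG $B^e$-modules separately: first that $\alpha$ is a homomorphism of the underlying graded $B^e$-modules, and then that it commutes with the differentials. The first property is immediate from the definition. On the bottom summand $B\otimes_A(\shift J)^{\otimes_B 0}=B^e$ the map $\alpha$ equals $\ddd_0=\pi_B$, which is a degree-preserving homomorphism of DG $B^e$-modules (here $B$ carries the $B^e$-action induced by $\pi_B$), and on every summand $B\otimes_A(\shift J)^{\otimes_B n}$ with $n\geq 1$ the map $\alpha$ is zero; hence $\alpha$ is $B^e$-linear and of degree $0$.

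For the compatibility with differentials I would check the identity $\alpha\circ\mathbb{D}=d^B\circ\alpha$ on each summand $B\otimes_A(\shift J)^{\otimes_B n}$ of $\mathbb{B}$, using that $\mathbb{D}=\partial^{\mathbb{B}}+\dddd$, that the internal part $\partial^{\mathbb{B}}$ preserves the index $n$, and that $\dddd_n$ lowers it to $n-1$. The key observation is that $\alpha$ vanishes on all summands of index $\geq 1$, so only the bottom of the tower can contribute to $\alpha\circ\mathbb{D}$.

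Concretely, for $n\geq 2$ the maps $\partial^{\mathbb{B}}$ and $\dddd_n$ land in the summands of index $n$ and $n-1$ respectively, on both of which $\alpha=0$; and $d^B\alpha=0$ there as well, so the equality holds trivially. For $n=1$ we have $d^B\alpha=0$ since $\alpha=0$ on the index-$1$ part, while $\alpha\mathbb{D}_1=\alpha(\partial^{\mathbb{B}}+\dddd_1)=\pi_B\circ\dddd_1$, because the $\partial^{\mathbb{B}}$-contribution stays in index $1$ where $\alpha$ vanishes. Now $\pi_B\circ\dddd_1=\ddd_0\circ\dddd_1$, and this is zero because, by Remark~\ref{disc20230211a}, the sequence $\cdots\xra{\dddd_2}\mathbb{B}_1\xra{\dddd_1}B^e\xra{\pi_B}B\to 0$ is the reduced bar resolution and in particular a complex. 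Finally, on the bottom summand $B^e$ we have $\mathbb{D}_0=\partial^{\mathbb{B}}|_{B^e}=\partial^{B^e}$ and $\alpha|_{B^e}=\pi_B$, so the required identity becomes $\pi_B\circ\partial^{B^e}=d^B\circ\pi_B$, which holds because $\pi_B$ is a homomorphism of DG $R$-algebras (see~\ref{para20201114e}).

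The computation is essentially bookkeeping once one records how $\partial^{\mathbb{B}}$ and $\dddd$ change the index $n$; the only place where something must genuinely be invoked is the index-$1$ case, where the vanishing $\ddd_0\circ\dddd_1=0$ rests on the identification in Remark~\ref{disc20230211a} of the $\dddd$-tower with the reduced bar resolution. I therefore expect that case, rather than the (formal) index-$0$ and higher-index cases, to be the main point of the argument.
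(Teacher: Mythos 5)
Your proposal is correct and follows essentially the same route as the paper: a case analysis on the tensor index $n$, with the higher cases trivial because $\alpha$ vanishes there, the $n=1$ case reducing to $\ddd_0\ddd_1=0$ from the reduced bar resolution, and the $n=0$ case being the statement that $\pi_B$ commutes with the differentials. The only (harmless) addition is your explicit check of $B^e$-linearity, which the paper takes as immediate.
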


\begin{proof}
To prove the assertion, it suffices to show that $\alpha\mathbb{D}= d^B\alpha$.
Let $n\geq 0$ be an integer and $b \otimes_A  \tau \in  B \otimes _A (\shift J)^{\otimes_B n}$.

If $n >1$, then $\alpha ( \mathbb{D} ( b \otimes_A \tau ) ) = 0$ by definition of $\alpha$ because in this case we have $ \mathbb{D} ( b \otimes_A \tau ) \in (B \otimes _A (\shift J)^{\otimes_B (n-1)})\oplus (B \otimes _A (\shift J)^{\otimes_B n})$. On the other hand, we also have $d^B (\alpha (b \otimes_A \tau)) =0$, again by definition of $\alpha$.

If $n=1$,  then
$\alpha (\mathbb{D}( b \otimes_A \tau ))= \ddd_0 (\ddd _1\psi_1^{-1}(b\otimes_A \tau)) =0=d^B (\alpha ( b \otimes_A \tau))$.

Finally, if $n=0$, then since $\tau \in B$,  we have
$$
\alpha (\mathbb{D}( b \otimes_A \tau )) = d^B(b)\tau + (-1) ^{|b|} bd^B(\tau) = d^B( b\tau ) = d^B (\alpha (b \otimes_A \tau))
$$
as desired.
\end{proof}

We are now ready to prove the main result of this paper.

\begin{thm}\label{thm20221205a}
$\alpha$ is a semifree resolution of the DG $B^e$-module $(B,d^B)$.
\end{thm}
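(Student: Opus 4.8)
The plan is to reduce the theorem to a single statement and then prove that statement with a spectral sequence. By Proposition~\ref{lem20221202a} the DG $B^e$-module $(\mathbb{B},\mathbb{D})$ is semifree, and by Proposition~\ref{lem20221205a} the map $\alpha$ is a morphism of DG $B^e$-modules; hence it remains only to show that $\alpha$ is a quasiisomorphism, equivalently that $\cone(\alpha)$ is acyclic.

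First I would exhibit $(\mathbb{B},\mathbb{D})$ as the total complex of a double complex. Writing a homogeneous generator as $b\otimes_A\tau_1\otimes_B\cdots\otimes_B\tau_n$, I record its bar degree $n$ and its internal degree $j=|b|+\sum_i|\tau_i|$, and regrade by $(p,q):=(n,\,j-n)$. With this bigrading the summand $\partial^{\mathbb{B}}$ of $\mathbb{D}$ is vertical (it fixes $p$ and lowers $q$ by one) while $\dddd$ is horizontal (it lowers $p$ by one and fixes $q$); the relation~\eqref{eq20221202a} says precisely that the two anticommute, so $(\mathbb{B},\mathbb{D})$ is the total complex of the resulting double complex, whose terms I write $C_{p,q}$. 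I would then adjoin $B$, equipped with its differential $d^B$, as the column $p=-1$, with the horizontal arrow out of the column $p=0$ taken to be $\alpha=\ddd_0=\pi_B$. The chain-map identity $\alpha\mathbb{D}=d^B\alpha$ from Proposition~\ref{lem20221205a} (together with $\pi_B\dddd_1=0$) makes this an honest double complex, whose total complex is, up to the usual shift, $\cone(\alpha)$.

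The key input is that every row of this augmented double complex is exact. Fixing $q$ and using the isomorphisms $\psi_n$ together with the identification $C_{p,q}\cong(B\otimes_A J^{\otimes_B p})_{q}$ coming from $(\shift J)^{\otimes_B n}_{\,m}\cong(J^{\otimes_B n})_{m-n}$, the $q$-th row is carried isomorphically onto the strand in internal degree $q$ of the reduced bar resolution
$$\cdots\to B\otimes_A J^{\otimes_B n}\xra{\ddd_n}\cdots\to B^e\xra{\ddd_0}B\to 0.$$
By Proposition~\ref{remark1} this is a graded free resolution of $B$, hence exact in each internal degree; so every row is exact and the spectral sequence that first takes homology along the rows has vanishing $E_1$-page.

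Finally I must guarantee that this spectral sequence converges, and here the non-negativity of the grading is essential. Since $B$ is non-negatively graded, so are $J$, each $J^{\otimes_B p}$, and hence each $C_{p,q}\cong(B\otimes_A J^{\otimes_B p})_{q}$, which vanishes unless $q\ge 0$; in a fixed total degree $d=p+q$ one then has $-1\le p\le d$, so the total complex is a finite direct sum in each degree and the relevant filtration is bounded there. Thus the spectral sequence converges, and $E_1=0$ forces $\cone(\alpha)$ to be acyclic, so $\alpha$ is a quasiisomorphism and the theorem follows. I expect the only genuine bookkeeping to be the sign/shift verification that the regraded $\dddd$ and $\partial^{\mathbb{B}}$ really assemble into a double complex and that the $\psi_n$ identify the rows with~\eqref{reduced bar}; the convergence, by contrast, is immediate from non-negativity.
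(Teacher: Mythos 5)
Your proof is correct and follows essentially the same route as the paper: both realize $(\mathbb{B},\mathbb{D})$ as the total complex of the double complex with differentials $\partial^{\mathbb{B}}$ and $\dddd$, identify the rows (via the $\psi_n$) with the exact reduced bar resolution~\eqref{reduced bar}, and conclude by convergence of the resulting first-quadrant spectral sequence. Your additional bookkeeping on the bigrading and on boundedness of the filtration only makes explicit what the paper leaves implicit.
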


\begin{proof}
We can regard  $\mathbb{B}$  as an underlying $\mathbb{Z} ^2$-graded right $R$-module. In fact, for  a homogeneous element  $\beta = b \otimes_A \tau \in  \mathbb{B}$ define
$$|\beta| =  (n, m) \in \mathbb{Z}^2\qquad \text{whenever}\qquad \tau \in \left((\shift J)^{\otimes_Bn}\right)_{m -|b|}.$$
Then,  $\partial ^{\mathbb{B}}$  is of degree $(0, -1)$ while  $\dddd$  is of degree $(-1, 0)$.
Note that each graded component $\mathbb{B}_{(n, m)}$ is an $R$-module.
Therefore, the complex  $(\mathbb{B}, \mathbb{D})$  is the total complex of the double complex $(\mathbb{B},  \partial ^{\mathbb{B}},  \dddd)$ of $R$-modules.
As we mentioned in Remark~\ref{disc20230211a},  the complex  $(\mathbb{B}, \dddd)$ is acyclic and hence, it is quasiisomorphic to $\HH_0(\mathbb{B}, \dddd) = B$.
Thus, the $E^2$ term of the first quadrant spectral sequence
$$
E_{p,q}^2:= \HH_p(\HH_q(\mathbb{B},  \dddd), \partial ^{\mathbb{B}})\overset{p}{\Longrightarrow} \HH_p(\mathbb{B},  \mathbb{D})
$$
degenerates and therefore, we have the isomorphisms
$$
\HH(\mathbb{B},  \mathbb{D}) \cong \HH(\HH_0(\mathbb{B},  \dddd), \partial ^{\mathbb{B}}) \cong \HH(B).
$$
This means that $\alpha$ is a quasiisomorphism and the assertion follows.
\end{proof}


\begin{disc}\label{prop20230114a}
We proved in Proposition~\ref{lem20221202a} that $(\mathbb{B}, \mathbb{D})$ is a semifree DG $B^e$-module. As we explain next, $(\mathbb{B}, \mathbb{D})$ has also a structure of a DG $T$-module.

Since  $\mathbb{B} = B \otimes _A T$, it naturally has a structure of underlying graded right $T$-module.
On the other hand, by definition, $\dddd$  is $T$-linear, i.e., for all $b \in B$ and $t_1, t_2 \in T$ we have
$\dddd ((b \otimes_A  t_1 )\otimes _B t_2) = \dddd (b \otimes _A t_1) \otimes _B t_2$.
Since  $\partial ^{\mathbb{B}}$ satisfies the Leibniz rule and $\dddd$ is $T$-linear, it is straightforward to see that  $\mathbb{D}$  satisfies the Leibniz rule as well, i.e.,
$\mathbb{D}((b \otimes _A t_1)\otimes _B t_2)= \mathbb{D}(b \otimes _A t_1) \otimes _B t_2+(-1)^{|b|+|t_1|}(b \otimes _A t_1) \otimes_B \partial ^T(t_2)$  for  $b \in B$ and $t_1, t_2 \in T$.

Note that, by definition, a DG $T$-module $M$ is just an underlying graded $B$-module $M$ with a DG $B$-module homomorphism $\mu\colon M \otimes_B \shift J \to M$,  which defines the action of $T$ on $M$ as
$m \cdot c = \mu (m \otimes_B c)$, for $m \in M$ and $c \in \shift J$.
Note that for the DG $B$-module $\mathbb{B} = B \otimes _A T$,
we have a natural DG $B$-module homomorphism
$$\mathbb{B} \otimes _B \shift J = B \otimes _A (T \otimes _B \shift J) \overset{\varepsilon}\hookrightarrow B \otimes _A T = \mathbb{B}.
$$
This inclusion map $\varepsilon$ defines the action of $T$ on $(\mathbb{B}, \mathbb{D})$, which is exactly the meaning of the previous paragraph regarding the DG $T$-module structure of $(\mathbb{B}, \mathbb{D})$.
\end{disc}

We conclude this section with the following discussion that is a consequence of Theorem~\ref{thm20221205a} and also is of a similar nature to our works in~\cite{NOY3, NOY2}.

\begin{disc}\label{disc20230102a}
For a semifree DG $B$-module $N$ with a semifree basis $\{ e_{\lambda}\}_{\lambda\in\Lambda}$, consider the right DG $B$-module homomorphism $\alpha_N:=\id_N \otimes_B \alpha$.
It follows from Theorem~\ref{thm20221205a} that $\alpha_N$ is a quasiisomorphism, i.e., $\alpha_N$ is an isomorphism in the homotopy category $\K(B)$.
Under the natural DG $B$-module isomorphisms $N \otimes _B (B, d^B)\cong (N,\partial^N)$ and $\varrho\colon N \otimes _B (\mathbb{B},\mathbb{D}) \xra{\cong}  (N\otimes _A T, \mathbb{D}_N)$, where $\mathbb{D}_N$ is the differential on $N\otimes _A T$ obtained naturally via $\varrho$, we can consider $\alpha_N$ as a  DG $B$-module homomorphism from the semifree DG $B$-module $(N\otimes _A T, \mathbb{D}_N)$ to the semifree DG $B$-module $N$.
Assume that
$\partial ^N (e_{\lambda} ) = \sum _{\mu < \lambda} e_{\mu} b_{\mu \lambda}$
is a matrix representation of the differential $\partial ^N$, where $b_{\mu \lambda}\in B$. We define a graded $B$-linear map $\beta_N\colon  N \to N \otimes _A T$ by the following formula:
\begin{align*}
\beta _N (e_{\lambda}) = & \
e_{\lambda} \otimes _A 1
+\sum _{\mu_1 < \lambda} e_{\mu _1} \otimes _A \delta (b_{\mu_1 \lambda} )
+\!\!\!\sum _{\mu_2 < \mu_1 < \lambda}\!\! e_{\mu _2} \otimes _A \delta (b_{\mu_2 \mu_1} ) \otimes_B \delta (b_{\mu_1 \lambda})\ +\\
&\cdots+\!\!\sum _{\mu_m < \cdots < \mu_1 < \lambda}\!\!\!\!\! e_{\mu _m} \otimes _A \delta (b_{\mu_m \mu_{m-1}} ) \otimes_B \cdots \otimes _B \delta (b_{\mu_2 \mu _1} ) \otimes _B  \delta (b_{\mu_1 \lambda} )\ +\cdots
\end{align*}
Since the index set $\Lambda$ is well-ordered and for each $\lambda\in\Lambda$ there is no infinite sequence $\cdots < \mu_2<\mu_1<\lambda$, the series of $\beta_N(e_{\lambda})$ terminates after finitely many terms.
Note that (surprisingly enough) the map $\beta_N$ contains a similar representation to the
obstruction of na\"{\i}ve lifting that is introduced in~\cite{NOY3}; see Appendix A for the definition of na\"{\i}ve lifting.

One can check that $\beta_N\colon  N \to N \otimes _A T$ is a DG $B$-module homomorphism that is the inverse morphism of $\alpha _N$ in $\K(B)$.
In fact, to prove that $\beta _N$ is a DG $B$-module homomorphism, we need to show that
$\beta _N \partial ^N = \mathbb{D}_N \beta _N$. This equality can be seen by evaluating both sides at basis elements $e_{\lambda}$, that is, one can check that $\beta _N(\partial ^N (e_{\lambda})) = \mathbb{D}_N(\beta _N(e_{\lambda}))$.
On the other hand, it is straightforward to see that $\alpha _N \beta _N = \id_N$.
Since $\alpha_N$ is an isomorphism in $\K(B)$, we have $\alpha _N ^{-1} = \beta _N$.
\end{disc}


\appendix
\section{Bar resolution and na\"{\i}ve lifting property}\label{appendixB}

Bar resolutions are well-known and extraordinarily useful constructions with a history that goes back quite far. These concepts are present in a wide range of algebraic and topological contexts; see for instance the works of Eilenberg and Mac Lane~\cite{EML, EML1} and Cartan~\cite{cartan, cartan1}.
In Section~\ref{main section} we used the notion of \emph{reduced} bar resolution to construct the semifree resolution $(\mathbb{B},\mathbb{D})$. In this appendix, we give a brief introduction to the notion of \emph{classical} bar resolution
and since our work in this paper is initially motivated by our study of the lifting theory of DG modules (see~\cite{NOY, NOY1, NOY3, nasseh:lql, nassehyoshino, OY} for the history and background), we discuss an application of the classical bar resolution in this theory; see Theorem~\ref{thm20221109a}.

\subsection*{Bar resolution}
We use the notation from Remark~\ref{para20221201a}. In this section, we regard $B ^{\otimes_A n}$ as a right DG $B^e$-module defined by the right action
$$
(b_1 \otimes_A b_2 \otimes_A  \cdots \otimes_A b_n) (b\otimes_A b') = (-1)^{|b|\left(|b_2|+\cdots+|b_n|\right)} b_1b  \otimes_A b_2 \otimes_A  \cdots \otimes_A b_nb'
$$
for all $b_1 \otimes_A b_2 \otimes_A  \cdots \otimes_A b_n\in B^{\otimes_A n}$ and $b\otimes_A b'\in B^e$.
With this description, for $n\geq 2$, we sometimes write $B \otimes_A B^{\otimes_A (n-2)} \otimes_A B$ instead of $B^{\otimes_A n}$ to emphasize that $B^{\otimes_A n}$ as an underlying graded $B^e$-module is generated by elements of the form $1 \otimes_A b_2 \otimes_A  \cdots \otimes_A b_{n-1}\otimes_A 1$.

Now, let $\dd _{-1}:=\pi_B$, where $\pi_B$ is the map defined in~\ref{para20201114e}. For integers $n\geq 1$, we define
$\dd _{n-1}\colon B \otimes_A B ^{\otimes_A n} \otimes_A B \to B \otimes_A B ^{\otimes_A (n-1)} \otimes_A B$
by the formula
\begin{align*}
\dd _{n-1} ( b_0 \otimes_A b_1 \otimes_A  \cdots \otimes_A b_{n+1})&=\\
\sum _{i =0}^{n} (-1)^{i}\ b_0 &\otimes_A \cdots \otimes_A b_{i-1} \otimes_A b_ib_{i+1} \otimes_A  b_{i+2}\otimes_A \cdots \otimes_A b_{n+1}.
\end{align*}
One can verify that for $n\geq 0$, the map $\dd _{n-1}$ is a DG $B^e$-module homomorphism such that $\dd _{n-1} \dd _{n} = 0$. Hence,
$$
(\mathbf{B},\dd):= \left(\cdots \xra{\dd _{n+1}} B \otimes_A B ^{\otimes_A (n+1)}  \otimes_A B \xra{\dd _{n}}  \cdots \xra{\dd _0} B ^{\otimes_A 2} \xra{\dd_{-1}} B \to 0\right)
$$
is a complex of DG $B^e$-modules.
\vspace{3pt}

The next result is very likely well-known; for instance, compare it to~\cite{Sri}.

\begin{prop}\label{prop20221206a}
The complex $(\mathbf{B},\dd)$ of DG $B^e$-modules is
\begin{enumerate}[\rm(a)]
\item a split exact sequence of right (and also left) DG $B$-modules; and
\item a (not necessarily split) exact sequence of DG $B^e$-modules.
\end{enumerate}
\end{prop}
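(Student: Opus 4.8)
The plan is to prove that the classical bar complex $(\mathbf{B},\dd)$ is exact, with the stronger claim (a) that it splits as a sequence of one-sided DG $B$-modules, and the weaker claim (b) that it is merely exact as a sequence of two-sided DG $B^e$-modules. The natural approach is to exhibit an explicit contracting homotopy that witnesses the one-sided splitting, and then observe that one-sided exactness forces two-sided exactness for free. I would begin by fixing the one-sided structure: since each $B^{\otimes_A n}$ is free as an underlying graded left (or right) $B$-module, and $B$ is semifree over $A$, the terms of the complex are well understood, and the usual bar-resolution contracting homotopy should transfer to this DG setting.

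First I would construct the standard homotopy $s_n\colon B\otimes_A B^{\otimes_A n}\otimes_A B \to B\otimes_A B^{\otimes_A(n+1)}\otimes_A B$ given on generators by
$$
s_n(b_0\otimes_A b_1\otimes_A\cdots\otimes_A b_{n+1}) = 1\otimes_A b_0\otimes_A b_1\otimes_A\cdots\otimes_A b_{n+1},
$$
inserting a $1$ in the front slot, together with the augmentation-level map $s_{-1}\colon B\to B^{\otimes_A 2}$ sending $b\mapsto 1\otimes_A b$. The key computation is the homotopy identity $\dd_{n-1}s_{n-1} + s_{n-2}\dd_{n-2} = \id$ (with the appropriate indexing at the ends), which I would verify by expanding $\dd$ via its alternating-sum formula and checking that all but the identity term cancel telescopically — this is the familiar bar-resolution cancellation, and the only care needed is in tracking the Koszul signs coming from the DG grading and from the right $B^e$-action defined in the appendix. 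Because $s$ is $B$-linear on the appropriate side (left-linear, say, since the inserted $1$ sits in the leftmost tensor factor and the left $B$-action acts on $b_0$), this shows $(\mathbf{B},\dd)$ is contractible, hence split exact, as a complex of left DG $B$-modules; the right-module case is symmetric, using a homotopy that inserts $1$ in the last slot. This establishes (a).

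For (b), I would argue that exactness is a statement about the underlying complex of $R$-modules, which is insensitive to whether one remembers the one-sided or two-sided module structure: a split exact sequence of left DG $B$-modules is in particular an exact sequence of $R$-modules, and since the maps $\dd_{n-1}$ are genuine DG $B^e$-module homomorphisms (as already asserted in the excerpt), the same underlying complex, now regarded as a complex of $B^e$-modules, is exact. Thus (b) follows immediately from (a) by forgetting structure; the only thing to add is the remark that the splitting need not be $B^e$-linear, which is precisely why (b) is stated as "not necessarily split."

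The main obstacle I anticipate is purely bookkeeping rather than conceptual: getting the signs exactly right in the homotopy identity, since the right $B^e$-action on $B^{\otimes_A n}$ carries the sign $(-1)^{|b|(|b_2|+\cdots+|b_n|)}$ and the DG differentials $d^B$ on each tensor factor contribute Koszul signs when commuting past the insertion map $s$. One must also confirm that $s$ is a morphism of the correct degree and interacts correctly with the internal differential $d^{B^{\otimes_A n}}$ (not just the bar differential $\dd$), so that the contraction is a genuine DG homotopy and not merely a homotopy of the associated graded complexes. Once the signs are pinned down on generators, $B$-linearity extends everything automatically, and no further difficulty should arise.
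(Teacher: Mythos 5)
Your proposal is essentially the paper's own proof: the paper uses exactly the front-insertion contracting homotopy $\hh_n(b_0\otimes_A b_1\otimes_A\cdots\otimes_A b_{n+1})=1\otimes_A b_0\otimes_A b_1\otimes_A\cdots\otimes_A b_{n+1}$, verifies the identities $\dd_n\hh_n+\hh_{n-1}\dd_{n-1}=\id$ and $\dd_{-1}\hh_{-1}=\id_B$, and deduces (b) from (a) by discarding the splitting while keeping exactness of the underlying complex. One correction to your side-labeling: inserting $1$ in the \emph{front} slot yields a \emph{right} DG $B$-module homomorphism, not a left one. Precisely because the left $B$-action acts on the first factor $b_0$, front insertion breaks left-linearity ($s(bb_0\otimes_A\cdots)=1\otimes_A bb_0\otimes_A\cdots$, which is not $b\otimes_A b_0\otimes_A\cdots$), whereas the right action on the last factor is untouched, so the map is right $B$-linear --- this is what the paper states. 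Your homotopy therefore proves the right-module splitting, and the symmetric insertion of $1$ in the last slot gives the left-module splitting; you have the two sides swapped, but the argument is otherwise the paper's.
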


\begin{proof}
For each integer $n\geq -1$, we define the map $\hh _n\colon  B ^{\otimes_A (n+2)}  \to  B ^{\otimes_A (n+3)}$ by
$$
\hh _n ( b_0 \otimes_A b_1 \otimes_A  \cdots \otimes_A b_{n+1}) = 1 \otimes_A  b_0 \otimes_A b_1 \otimes_A  \cdots \otimes_A b_{n+1}.
$$
Note that $\hh_n$ is a right DG $B$-module homomorphism and for all $n \geq 0$, one can check that the equalities
$\dd _{n} \hh _n + \hh_{n-1} \dd _{n-1} = \id _{B ^{\otimes_A (n+2)}}$ and $\dd _{-1} \hh _{-1} = \id _{B}$ hold.
This shows that the complex $(\mathbf{B},\dd)$ is a split exact sequence of right DG $B$-modules, and hence, $(\mathbf{B},\dd)$ is exact as a complex of DG $B^e$-modules.
Note that $\hh_n$ is only a right DG $B$-module homomorphism and hence, it is not a DG $B^e$-module homomorphism. Therefore, $(\mathbf{B},\dd)$ is not necessarily split as a complex of DG $B^e$-modules.

By symmetry, $(\mathbf{B},\dd)$ is a split exact sequence of left DG $B$-modules as well.
\end{proof}

\begin{defn}\label{defn20221108a}
The exact complex $(\mathbf{B},\dd)$ of DG $B^e$-modules is called the \emph{bar resolution} of the DG $B^e$-module $B$.
\end{defn}




\begin{para}\label{para20221115b'}
Let $\nu\colon B \to B^{\otimes_A 3}$ be a map defined by $\nu (b) = -(1 \otimes_A b \otimes_A 1)$ and note that $\delta = \dd _0 \nu$.
Let ${}^{0}\!J := B$ and for a positive integer $n$, let  ${}^n\!J := \Ker \dd _{n-2}$, which is a DG $B^e$-submodule of $B \otimes_A B^{\otimes_A (n-1)} \otimes_A B$.
Notice that ${}^1\!J \subseteq  B^{\otimes_A 2}$ is the diagonal ideal $J$ of $B$ over $A$ and it follows from the exactness of $(\mathbf{B},\dd)$ that ${}^n\!J = \im \dd_{n-1}$.
Therefore, for each $n \geq 1$, there is a short exact sequence
\begin{equation}\label{sex}
0 \to {}^n\!J \to B \otimes_A B ^{\otimes_A (n-1)}  \otimes_A B \to {}^{n-1}\!J \to 0
\end{equation}
of DG $B^e$-modules that is split as a sequence of right (resp. left) DG $B$-modules.
\end{para}

\begin{defn}\label{para20221115b''}
Let $L$ be a graded $B^e$-module.
A map $D\colon B \to  L$  is called an \emph{$A$-derivation} if it is $A$-linear and satisfies the equality $D (bb') = D(b)b' + bD(b')$  for all $b,  b' \in B$.
We denote by  $\Der _A (B, L)$ the set of all $A$-derivations from $B$ to $L$.
\end{defn}

\begin{prop}\label{para20221115b}
Let $L$ be a graded $B^e$-module. For a $B^e$-linear map $f\colon J \to L$, the map
$\eta\colon \Hom _{B^e} (J , L)  \to \Der _A (B, L)$
defined by $\eta(f)=f \delta$ is a bijection.
\end{prop}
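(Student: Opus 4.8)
The plan is to establish that $\eta(f) = f\delta$ is well-defined, $A$-linear, a derivation, and then exhibit an explicit inverse using the universal property encoded in the short exact sequence~\eqref{basic sequence}. First I would verify that $\eta$ lands in $\Der_A(B,L)$: since $\delta(b) = 1\otimes_A b - b\otimes_A 1$ is $A$-linear and $f$ is $B^e$-linear, the composite $f\delta$ is automatically $A$-linear. For the Leibniz rule, I would compute $\delta(bb') = 1\otimes_A bb' - bb'\otimes_A 1$ and compare it to the expression $\delta(b)b' + b\delta(b')$, using the $B^e$-bimodule action; a short calculation shows $\delta(bb') = \delta(b)\cdot b' + b\cdot\delta(b')$ in $J$, whence applying $f$ gives $f\delta(bb') = (f\delta(b))b' + b(f\delta(b'))$, so $\eta(f)$ is indeed an $A$-derivation. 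Linearity of $\eta$ in $f$ is immediate.

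The heart of the argument is constructing the inverse. Given an $A$-derivation $D\colon B \to L$, I want to produce a unique $B^e$-linear map $f\colon J \to L$ with $f\delta = D$. By Proposition~\ref{remark1}, $J \cong \overline{B}\otimes_A B$ as left $B$-modules, and more explicitly, by the computation in Remark~\ref{para20221201a} the isomorphism $\kappa\colon B\otimes_A\overline{B}\to J$ sends $b_1\otimes_A\overline{b_2}\mapsto b_1\delta(b_2)$, so $J$ is generated as a left $B$-module (indeed as a $B^e$-module) by the elements $\delta(b_\lambda)$ for a semifree basis $\{b_\lambda\}\cup\{1\}$ of $B$ over $A$. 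I would therefore define $f$ on the generating set by $f(\delta(b)) := D(b)$ and extend $B^e$-linearly; the freeness of $J$ as a one-sided $B$-module with basis $\{\delta(b_\lambda)\}$ (established in Remark~\ref{para20221201a}) guarantees this is well-defined. The relation $f\delta = D$ then holds by construction on basis elements and extends by $A$-linearity of both sides.

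The main obstacle I anticipate is checking that the map $f$ defined on generators is genuinely $B^e$-linear and respects the differential, i.e.\ is a DG $B^e$-module homomorphism rather than merely a graded one. The subtlety is that $J$ is free over $B$ only on one side, so I must confirm that the prescribed values on the left-$B$-basis $\{\delta(b_\lambda)\}$ are compatible with the right $B$-action and with the differential $d^{B^e}|_J$. For compatibility with the differential, the key point is that $D$ being a derivation forces $f$ to commute with the differentials: one computes $d^L(f\delta(b)) = d^L(D(b))$ and $f(d^J\delta(b)) = f(\delta(d^B(b)))= D(d^B(b))$, and the derivation property together with $\delta$ being a chain map reduces this to the identity $d^L D = D d^B$, which holds because $D$ is a DG-derivation into the DG bimodule $L$. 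Finally I would observe that $\eta$ and the assignment $D\mapsto f$ are mutually inverse: $\eta(f) = f\delta = D$ by construction, and conversely any $g\in\Hom_{B^e}(J,L)$ is determined by its values $g(\delta(b_\lambda))$ since these generate $J$ over $B^e$, so $g = f$ whenever $\eta(g) = \eta(f)$; this yields injectivity, and surjectivity follows from the explicit construction, completing the proof that $\eta$ is a bijection.
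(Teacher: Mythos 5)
Your construction of the inverse takes a genuinely different route from the paper's: the paper forms the $B^e$-linear map $g=B\otimes_A D\otimes_A B\colon B^{\otimes_A 3}\to L$, uses the derivation property of $D$ to show $g$ vanishes on $\im\dd_1={}^2\!J$, and lets $g$ descend along $B^{\otimes_A 3}\onto J$; you instead exploit the left-$B$-freeness of $J$ on $\{\delta(b_\lambda)\}_{\lambda\in\Lambda}$ and prescribe $f$ on that basis. Your route can be completed, but as written it has a gap at exactly the point you flag and then do not resolve. You correctly note that $J$ is free over $B$ on one side only, so that setting $f(b\,\delta(b_\lambda)):=b\,D(b_\lambda)$ yields a priori only a left $B$-linear map, and that one ``must confirm'' compatibility with the right $B$-action. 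That confirmation never happens: the paragraph devoted to the obstacle instead checks compatibility with differentials, which is not part of the statement at all --- $L$ is merely a \emph{graded} $B^e$-module, there is no $d^L$, and neither $\Hom_{B^e}(J,L)$ nor $\Der_A(B,L)$ imposes a chain-map condition. The right-linearity check is precisely where the derivation property of $D$ must enter on the inverse side; it is the counterpart of the paper's computation $g(\dd_1(b_0\otimes_A b_1\otimes_A b_2\otimes_A b_3))=0$, and omitting it omits the substance of the proposition.

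To close the gap along your lines: first verify that $f(\delta(b))=D(b)$ for \emph{every} $b\in B$, not just basis elements, by writing $b=1\cdot a_0+\sum_\mu b_\mu a_\mu$ and using $\delta(1)=0=D(1)$, the $A$-linearity of $\delta$ and $D$, and the sign rule $\delta(b_\mu)a=(-1)^{|a||b_\mu|}a\,\delta(b_\mu)$ needed to move coefficients to the side on which $f$ is linear. Then right $B$-linearity on generators follows from the two Leibniz rules, $\delta(bb')=\delta(b)b'+b\delta(b')$ and $D(bb')=D(b)b'+bD(b')$: one gets $f(\delta(b_\lambda)b')=f(\delta(b_\lambda b'))-b_\lambda f(\delta(b'))=D(b_\lambda b')-b_\lambda D(b')=D(b_\lambda)b'$, and this extends to all of $J$ by left $B$-linearity. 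The parts of your argument that are already complete --- that $\eta(f)=f\delta$ is an $A$-derivation, and that a $B^e$-linear map on $J$ is determined by its values on the $\delta(b_\lambda)$, giving injectivity --- are fine.
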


This proposition which records the universal property of the diagonal ideal using the universal derivation and bar resolution
is classically well-known; see, for instance,~\cite[Proposition 2.4]{CQ}. However, we give a quick proof in this context for the reader's convenience.

\begin{proof}
For a $B^e$-linear map  $f\colon J \to L$, one can check that the composition $f\delta$ is an $A$-derivation, and hence, $\eta$ is well-defined.

Now, let $D\in \Der _A (B, L)$. Since $D$ is $A$-linear on both sides, we can define a $B^e$-linear map
$g := B \otimes _A D \otimes _A B : B^{\otimes_A 3} \to L$ by $g(b_0 \otimes_A b_1 \otimes_A b_2)=b_0 D(b_1)  b_2$.
The derivation property of $D$ implies
$g (\dd _1 ( b_0 \otimes_A b_1 \otimes_A b_2 \otimes_A b_3)) =0$.
Hence,  $\im \dd _1= {}^2\!J\subseteq \ker(g)$. It follows from~\eqref{sex} that $g$  induces a $B^e$-linear map $\overline{g}\colon J \to L$.
One can check that the correspondence $D \mapsto -\overline{g}$ is the inverse of $\eta$.
\end{proof}

\subsection*{Na\"{\i}ve liftability of DG modules} In this subsection, we discuss the relation between the lifting theory of DG modules and the classical bar resolutions.

\begin{defn}\label{para20221108e}
Let $N$ be a semifree DG $B$-module, and let $N |_A$ denote $N$ regarded as a DG $A$-module via the DG $R$-algebra homomorphism $\varphi$.
We say that $N$ is {\it na\"ively liftable  to $A$} if
the DG $B$-module epimorphism  $\pi_N\colon N |_A \otimes _A B \to N$
defined by $\pi_N(n \otimes b)=nb$ is split.
\end{defn}

\begin{disc}\label{para20221109a}
Let  $N$  be a semifree DG $B$-module. Assuming $B^{\otimes_A (-1)} \otimes_A B = A$, for all integers $n \geq -1$, there are natural isomorphisms
$N \otimes _B (B \otimes_A B ^{\otimes_A n}  \otimes_A B) \cong N \otimes _A B ^{\otimes_A n}  \otimes_A B$
of DG $B$-modules.
Hence, $N\otimes_B (\mathbf{B}, \dd)$ is of the form
$$
\cdots \xra{\dd ^N_{n}} N \otimes_A B ^{\otimes_A n}  \otimes_A B \xra{\dd ^N_{n-1}} \cdots \xra{\dd ^N_{1}} N \otimes _A B ^{\otimes_A 2 } \xra{\dd ^N _0}  N \otimes _A B \xra{\dd^N_{-1}} N \to 0
$$
where  $\dd^N_{n-1}$ is defined by the formula
\begin{align*}
\dd^N_{n-1} (x \otimes_A b_1 \otimes_A \cdots \otimes_A b_{n+1})
&=x b_1 \otimes_A \cdots \otimes_A b_{n+1}\\
& + \sum  _{i=1}^{n} (-1)^i x \otimes_A b_1 \otimes_A  \cdots \otimes_A b_ib_{i+1}\otimes_A \cdots \otimes_A b_{n+1}
\end{align*}
for $x \in N$  and  $b _i \in B$.
Since $(\mathbf{B},\dd)$ is an exact complex of DG $B$-modules and $N$ is free as an underlying graded $B$-module, the complex $N\otimes_B (\mathbf{B}, \dd)$ of DG $B$-modules is also exact.
Note that $\dd_{-1}^N=\pi_N$, where $\pi_N$ is the map defined in Definition~\ref{para20221108e}.
Hence, by definition, $N$ is na\"{\i}vely liftable to $A$  if and only if  $\dd_{-1}^N$ is a splitting DG $B$-module epimorphism.

Again, since $N$ is free as an underlying graded $B$-module, by virtue of~\eqref{sex}, for  all integers $n \geq 1$, we have the following short exact sequences of DG $B$-modules:
\begin{equation}\label{Nsex}
0 \to N \otimes _B {}^n\!J \to N \otimes _A B ^{\otimes_A (n-1) } \otimes_A B \to N \otimes _B {}^{n-1}\!J \to 0.
\end{equation}
\end{disc}

The main result of this section is the following.

\begin{thm}\label{thm20221109a}
The following are equivalent for a semifree DG $B$-module $N$:
\begin{enumerate}[\rm(i)]
\item
$N$ is na\"{\i}vely liftable to $A$;
\item
$\dd_{-1}^N$ is a splitting DG $B$-module epimorphism;
\item
The complex $N\otimes_B (\mathbf{B}, \dd)$ of DG $B$-modules is split exact, that is, for all integers $n \geq 1$, the short exact sequences~\eqref{Nsex} of DG $B$-modules split.
\end{enumerate}
\end{thm}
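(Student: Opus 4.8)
The plan is to prove the cycle of implications (i)$\Leftrightarrow$(ii) first, since this is essentially definitional, and then establish (ii)$\Leftrightarrow$(iii), where the real content lies. The equivalence (i)$\Leftrightarrow$(ii) follows immediately from Remark~\ref{para20221109a}, where it is observed that $\dd_{-1}^N = \pi_N$ under the natural isomorphism $N\otimes_B(B\otimes_A B)\cong N\otimes_A B$; the definition of na\"{\i}ve liftability (Definition~\ref{para20221108e}) is precisely the statement that $\pi_N$ is a split DG $B$-module epimorphism.

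For (iii)$\Rightarrow$(ii), the implication is trivial: if the entire complex $N\otimes_B(\mathbf{B},\dd)$ is split exact, then in particular the short exact sequence at the bottom splits, so $\dd_{-1}^N$ admits a DG $B$-linear section. The substantive direction is (ii)$\Rightarrow$(iii). Here I would argue that a single splitting at the bottom propagates upward through all the short exact sequences~\eqref{Nsex}. The key structural fact to exploit is that ${}^n\!J = \Ker\dd_{n-2} = \im\dd_{n-1}$, so the long exact complex $N\otimes_B(\mathbf{B},\dd)$ breaks into the short exact sequences~\eqref{Nsex}, each of which is already split as a sequence of \emph{one-sided} (right) graded $B$-modules by Proposition~\ref{prop20221206a}(a) together with freeness of $N$. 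Thus the only obstruction to splitting each~\eqref{Nsex} as a sequence of DG $B$-modules (i.e.\ compatibly with differentials) is a homotopy-theoretic one.

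The approach I expect to work is to use the DG $T$-module structure and the explicit homotopy $\hh_n$ from the proof of Proposition~\ref{prop20221206a}. A splitting $\sigma$ of $\dd_{-1}^N=\pi_N$ is equivalent, via Proposition~\ref{para20221115b} (the universal property of $J$ realized by the universal derivation $\delta$), to the vanishing of a certain derivation-type obstruction; and once this lowest obstruction vanishes, the tensor-algebra bookkeeping in Remark~\ref{disc20230102a} shows that the higher splittings can be constructed inductively. Concretely, I would build a DG $B$-linear section $s_n\colon N\otimes_B{}^{n-1}\!J\to N\otimes_A B^{\otimes_A(n-1)}\otimes_A B$ of the surjection in~\eqref{Nsex} by induction on $n$, using the given section $s_1$ of $\pi_N$ at the base, and at each stage correcting the one-sided homotopy $\hh$ by a term built from $s_1$ so that it becomes compatible with the differential. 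The inductive step uses that each~\eqref{Nsex} is $B$-split on one side, so a genuine DG section differs from a chosen graded section by something measured in $\Hom_{B}(N\otimes_B{}^{n-1}\!J, N\otimes_B{}^n\!J)$, and the availability of $s_1$ kills the relevant cocycle.

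The main obstacle will be the inductive step of (ii)$\Rightarrow$(iii): verifying that the section at level $n-1$ can be lifted to a DG $B$-linear section at level $n$ compatible with the differentials. The difficulty is that splitting as \emph{graded} modules is automatic (from the one-sided splitting plus freeness of $N$), but upgrading to a splitting of \emph{DG} modules requires controlling the differential, and the naive graded section need not commute with $\partial$. The cleanest route is to reduce everything to the single bottom splitting by exhibiting an explicit formula for the higher sections in terms of $s_1$, $\delta$, and the homotopy $\hh_n$ — paralleling the explicit series for $\beta_N$ in Remark~\ref{disc20230102a} — and then to check the DG-compatibility by a direct computation using the Leibniz rule and the relation $\delta=\dd_0\nu$. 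If a fully explicit formula proves unwieldy, the fallback is a dimension-shifting argument: splitting~\eqref{Nsex} at every level is equivalent to the vanishing of $\Ext^1_{B^e}(\text{-},\text{-})$-type obstructions that all collapse to the single obstruction at the bottom, which vanishes by hypothesis (ii).
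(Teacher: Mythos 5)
Your treatment of (i)$\Leftrightarrow$(ii) and of (iii)$\Rightarrow$(ii) matches the paper, but the substantive direction (ii)$\Rightarrow$(iii) has a genuine gap: you describe three possible strategies (an explicit series paralleling $\beta_N$, an inductive correction of the one-sided homotopy $\hh_n$, and an $\ext^1$-obstruction argument) without carrying out or verifying any of them. The inductive step you yourself flag as ``the main obstacle'' is exactly the point that requires an actual construction, and your fallback --- that the splitting obstructions for all the sequences~\eqref{Nsex} ``collapse to the single obstruction at the bottom'' --- is essentially a restatement of the implication to be proved, so as written it is circular. Nothing in the sketch identifies the mechanism that makes the bottom splitting propagate upward.

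The paper's proof shows that no induction and no correction term are needed. Given a DG $B$-linear section $\rho\colon N\to N\otimes_A B$ of $\dd^N_{-1}$, it defines for each $n\geq 2$ the DG $B$-module homomorphism $\lambda_n\colon N\otimes_B B^{\otimes_A n}\to N\otimes_B B^{\otimes_A (n+1)}$ by $\lambda_n(x\otimes_B b_1\otimes_A\cdots\otimes_A b_n)=\rho(x)\otimes_B b_1\otimes_A\cdots\otimes_A b_n$, i.e.\ it applies $\rho$ only to the $N$-factor under the identification $(N\otimes_A B)\otimes_B B^{\otimes_A n}\cong N\otimes_B B^{\otimes_A(n+1)}$. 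The Leibniz-type formula of Lemma~\ref{lem20221112a} then yields $\dd^N_{n-2}(\lambda_n(x\otimes_B\beta))=x\otimes_B\beta-\rho(x)\otimes_B\dd_{n-3}(\beta)$, and since ${}^{n-1}\!J=\Ker\dd_{n-3}$, the composite $\dd^N_{n-2}\lambda_n$ restricts to the identity on $N\otimes_B{}^{n-1}\!J$; hence $\lambda_n$ restricted to $N\otimes_B{}^{n-1}\!J$ is a DG $B$-linear section of~\eqref{Nsex} in one stroke, for every $n$ simultaneously. If you want to salvage your outline, the missing ingredient is precisely this compatibility of $\dd^N$ with the tensor decomposition $B^{\otimes_A n}\otimes_B B^{\otimes_A m}\cong B^{\otimes_A(n+m-1)}$ (Lemma~\ref{lem20221112a}): with it in hand, the ``correction term'' you anticipated is $\rho(x)\otimes_B\dd_{n-3}(\beta)$, and it vanishes for free on the kernel, so no homotopy-theoretic obstruction ever has to be analyzed.
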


The proof of this theorem is given after the following lemma which can be verified easily using the fact that $\dd _n = 0$ for all $n \leq -2$. For this lemma, for positive integers $m, n$, note that the DG $B$-modules $(N \otimes _A B^{\otimes_A n} ) \otimes _B B ^{\otimes_A m}$ and $N \otimes _A B ^{\otimes_A (n+m-1)}$ are identified using the isomorphism $B^{\otimes_A n} \otimes _B B ^{\otimes_A m}\cong B ^{\otimes_A (n+m-1)}$.
In fact, considering the elements  $\xi = b_1 \otimes_A \cdots \otimes_A b_n \in B ^{\otimes_A n}$ and
$\xi' = b'_1 \otimes_A \cdots \otimes_A b'_m \in B ^{\otimes_A m}$, we have $\xi \otimes _B \xi' =b_1 \otimes_A \cdots \otimes_A b_n b'_1 \otimes_A \cdots \otimes_A b'_m$ as an element of $B ^{\otimes_A (n+m-1)}$.

\begin{lem}\label{lem20221112a}
Let $N$  be a semifree DG $B$-module and $m, n$ be positive integers. Then the following equalities hold for $\beta \in  B ^{\otimes_A n}$, $\beta ' \in  B ^{\otimes_A m}$, and $\gamma \in  N \otimes _A B ^{\otimes_A n}$:
\begin{align*}
\dd _{n+m-4} (\beta \otimes _B \beta' ) &= \dd _{n-3}(\beta) \otimes _B \beta' +(-1)^{n+1} \beta \otimes _B \dd_{m-3} (\beta')\\
\dd ^N_{n+m-3} (\gamma \otimes _B \beta' )& = \dd ^N_{n-2}(\gamma) \otimes _B \beta' +(-1)^{n} \gamma \otimes _B \dd_{m-3} (\beta').
\end{align*}
\end{lem}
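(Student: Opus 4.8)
The plan is to verify both identities by direct evaluation on decomposable tensors: each side is additive in its arguments, so it suffices to treat $\beta = b_1 \otimes_A \cdots \otimes_A b_n$, $\beta' = b'_1 \otimes_A \cdots \otimes_A b'_m$, and $\gamma = x \otimes_A b_1 \otimes_A \cdots \otimes_A b_n$. Throughout I would use the concatenation identification recorded immediately before the lemma, by which $\beta \otimes_B \beta'$ becomes the single tensor $b_1 \otimes_A \cdots \otimes_A b_n b'_1 \otimes_A \cdots \otimes_A b'_m$ in which the last factor of $\beta$ and the first factor of $\beta'$ are multiplied together; I will call this merged factor $b_n b'_1$ the \emph{seam}.

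For the first identity I would expand $\dd_{n+m-4}(\beta \otimes_B \beta')$ via its defining alternating sum of adjacent-factor merges and partition the resulting terms according to the position of the merged pair relative to the seam. The merges of two factors lying strictly to the left of the seam, together with the merge $b_{n-1}(b_n b'_1)$, recombine---using associativity $b_{n-1}(b_n b'_1) = (b_{n-1}b_n)b'_1$---into $\dd_{n-3}(\beta) \otimes_B \beta'$; symmetrically, the merges strictly to the right of the seam, together with the merge $(b_n b'_1)b'_2 = b_n(b'_1 b'_2)$, recombine into $\beta \otimes_B \dd_{m-3}(\beta')$. The content is entirely in the signs: the merge $(b_n b'_1)b'_2$ that opens the right-hand group sits at index $n-1$ of the concatenated string and so carries sign $(-1)^{n-1} = (-1)^{n+1}$, which is exactly the prefactor appearing in front of $\beta \otimes_B \dd_{m-3}(\beta')$; the remaining signs then match because merges at consecutive positions alternate, just as in $\dd_{n-3}$ and $\dd_{m-3}$.

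The second identity is proved the same way, with two cosmetic changes. The leading term of $\dd^N_{n+m-3}$ multiplies $x$ into $b_1$ inside $N$ rather than merging two factors of $B$, and this term is precisely the leading term of $\dd^N_{n-2}(\gamma) \otimes_B \beta'$; when $n=1$ this leading piece is the augmentation $\dd^N_{-1} = \pi_N$, which is genuinely nonzero but causes no difficulty. In addition, the $B$-factors in the definition of $\dd^N$ are indexed starting from $1$ rather than from $0$, so the seam now occupies one slot further along and the seam-crossing merge opening the right-hand group occurs at index $n$; this shifts the governing sign from $(-1)^{n+1}$ to $(-1)^n$ and accounts for the different prefactor in the statement.

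I expect the only delicate point to be the sign bookkeeping at the seam, where associativity is needed to splice a single term of $\dd_{n+m-4}$ (resp. $\dd^N_{n+m-3}$) onto the boundary of $\dd_{n-3}(\beta) \otimes_B \beta'$ and of $\beta \otimes_B \dd_{m-3}(\beta')$. The only other thing to watch is the degenerate cases $n = 1$ or $m = 1$: there a $B$-side differential acquires a negative subscript and vanishes by the convention $\dd_k = 0$ for $k \le -2$, so the corresponding summand simply drops out and the identities persist. This is exactly the observation flagged before the statement, and it renders the whole verification routine once the seam analysis is in place.
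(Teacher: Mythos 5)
Your verification is correct and is exactly the direct computation the paper intends: the paper states this lemma without proof, remarking only that it ``can be verified easily using the fact that $\dd_n=0$ for all $n\leq -2$.'' Your seam analysis, the sign bookkeeping (the seam-crossing merge sits at position $n-1$ in the first identity, giving $(-1)^{n-1}=(-1)^{n+1}$, and shifts to position $n$ in the second because the $N$-factor occupies the leading slot, giving $(-1)^n$), and the handling of the degenerate cases $n=1$ or $m=1$ via the convention $\dd_k=0$ for $k\leq-2$ are all accurate.
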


\vspace{5pt}

\noindent \emph{Proof of Theorem~\ref{thm20221109a}.} The equivalence (i) $\Longleftrightarrow$ (ii) is from Remark~\ref{para20221109a}. Also, the implication (iii)$\implies$(ii) is trivial.

(ii)$\implies$(iii): By our assumption, there exists a DG $B$-module homomorphism $\rho\colon N \to N \otimes _A B$ such that
$\dd ^N_{-1} \rho = \id _N$.
For each integer $n \geq 2$, define the DG $B$-module homomorphism
$\lambda_n\colon N \otimes _B B ^{\otimes_A n} \to N \otimes _B B ^{\otimes_A (n+1)}$ by the formula
$$\lambda_n (x\otimes_B b_1 \otimes_A \cdots \otimes_A b_{n})
= \rho (x)\otimes_B b_1 \otimes_A b_{2} \otimes_A \cdots \otimes_A b_{n}.
$$
It follows from Lemma~\ref{lem20221112a} that
$\dd ^N_{n-2} (\lambda _n (x \otimes _B  \beta ) )= x \otimes _B \beta - \rho(x) \otimes _B  \dd _{n-3}(\beta)$
for all $x \in N$ and $\beta \in B ^{\otimes_A n}$.
Therefore, if we assume that $\beta \in {}^{n-1}\!J$, then
$$
\dd^N_{n-2}(\lambda _n (x \otimes _B \beta))  = x \otimes _B  \beta.
$$
This means that the restriction of the DG $B$-module homomorphism $\dd ^N_{n-2} \lambda _n$ on $N \otimes _B {}^{n-1}\!J$ is the identity map.
Thus, the sequence~\eqref{Nsex} is split.
\qed




\section*{Acknowledgments}
We are grateful to Ben Briggs, Srikanth Iyengar, and Josh Pollitz for useful
comments on this work regarding the history of bar resolutions and for introducing
some of the references.


\begin{thebibliography}{10}

\bibitem{avramov:ifr}
L.~L. Avramov, \emph{Infinite free resolutions}, Six lectures on commutative
  algebra (Bellaterra, 1996), Progr. Math., vol. 166, Birkh\"auser, Basel,
  1998, pp.~1--118.

\bibitem{avramov:dgha}
L.~L. Avramov, H.-B.\ Foxby, and S.\ Halperin, \emph{Differential graded
  homological algebra}, in preparation.


\bibitem{AH}
L.~L. Avramov and S.\ Halperin, \emph{Through the looking glass: a dictionary between rational homotopy theory and local algebra}, Algebra, algebraic topology and their interactions (Stockholm, 1983), 1--27, Lecture Notes in Math., 1183, Springer, Berlin, 1986.

\bibitem{AINSW}
L.~L. Avramov, S.~B.~Iyengar, S.~Nasseh, and S.~Sather-Wagataff, \emph{Homology over trivial extensions of commutative DG algebras}, Comm. Algebra \textbf{47} (2019), 2341--2356.



\bibitem{cartan}
H. Cartan, \emph{Sur les groupes d'Eilenberg-Mac Lane $\HH(\pi, n)$. I. M\'{e}thode des constructions}, (French) Proc. Nat. Acad. Sci. U.S.A., {\bf 40} (1954), 467--471.

\bibitem{cartan1}
H. Cartan, \emph{Sur les groupes d'Eilenberg-Mac Lane. II}, (French) Proc. Nat. Acad. Sci. U.S.A., {\bf 40} (1954), 704--707.


\bibitem{CQ}
J.~Cuntz and D.~Quillen, \emph{Algebra extensions and nonsingularity}, J. Amer. Math. Soc., \textbf{8} (1995), no.~2, 251--289.

\bibitem{EML}
S. Eilenberg and S. Mac Lane, \emph{On the Groups $\HH(\pi, n)$. I},  Ann. of Math. (2), {\bf 58} (1953), 55--106.

\bibitem{EML1}
S. Eilenberg and S. Mac Lane, \emph{On the Groups $\HH(\pi, n)$. II. Methods of computation},  Ann. of Math. (2), {\bf 60} (1954), 49--139.

\bibitem{felix:rht}
Y.\ F{\'e}lix, S.\ Halperin, and J.-C.\ Thomas, \emph{Rational homotopy
  theory}, Graduate Texts in Mathematics, vol. 205, Springer-Verlag, New York,
  2001.


\bibitem{GL}
Tor H.~Gulliksen and G.~Levin,
{\it Homology of local rings}, Queen's Paper in Pure and Applied Mathematics, No. 20 (1969), Queen's University, Kingston, Ontario, Canada.


\bibitem{Sri}
S. B. Iyengar, \emph{Free resolutions and change of rings}, J. Algebra, {\bf 190} (1997), no.~1, 195--213.


\bibitem{NOY}
S.~Nasseh, M.~Ono, and Y.~Yoshino, \emph{The theory of $j$-operators with application to (weak) liftings of DG modules}, J. Algebra, {\bf 605} (2022), 199--225.

\bibitem{NOY1}
S.~Nasseh, M.~Ono, and Y.~Yoshino, \emph{Na\"{\i}ve liftings of DG modules}, Math. Z., {\bf 301} (2022), no.~1, 1191--1210.

\bibitem{NOY3}
S.~Nasseh, M.~Ono, and Y.~Yoshino, \emph{Obstruction to na\"{\i}ve liftability of DG modules}, preprint 2022 (\texttt{arXiv:2109.00607}).

\bibitem{NOY2}
S.~Nasseh, M.~Ono, and Y.~Yoshino, \emph{Diagonal tensor algebra and na\"{\i}ve liftings}, in preparation.

\bibitem{nasseh:lql}
S.~Nasseh and S.~Sather-Wagstaff, \emph{Liftings and Quasi-Liftings of DG modules}, J. Algebra, {\bf 373} (2013), 162--182.


\bibitem{nassehyoshino}
S.~Nasseh and Y.~Yoshino, \emph{Weak liftings of DG modules}, J. Algebra, {\bf 502} (2018), 233--248.

\bibitem{OY}
M.~Ono and Y.~Yoshino, \emph{A lifting problem for DG modules}, J. Algebra {\bf 566} (2021), 342--360.


\bibitem{Tate}
J.~Tate, \emph{Homology of Noetherian rings and local rings}, Illinois J. Math. {\bf 1} (1957), 14--27.


\end{thebibliography}
\providecommand{\bysame}{\leavevmode\hbox to3em{\hrulefill}\thinspace}
\providecommand{\MR}{\relax\ifhmode\unskip\space\fi MR }
\providecommand{\MRhref}[2]{%
  \href{http://www.ams.org/mathscinet-getitem?mr=#1}{#2}
}
\providecommand{\href}[2]{#2}

\end{document}